\newcommand{\spacedcdot}{{\,\cdot\,}}
\newcommand{\Q}{{\mathbb{Q}}}
\newcommand{\C}{{\mathbb{C}}}
\newcommand{\R}{{\mathbb{R}}}
\newcommand{\Z}{{\mathbb{Z}}}
\newcommand{\diag}{{\mathrm{diag}}}
\newcommand{\GL}{{\mathrm{GL}}}
\newcommand{\Sp}{{\mathrm{Sp}}}
\newcommand{\SL}{{\mathrm{SL}}}
\newcommand{\U}{{\mathrm{U}}}
\newcommand{\calH}{{\mathcal{H}}}
\providecommand{\abs}[1]{\left\lvert#1\right\rvert}
\providecommand{\norm}[1]{\left\lVert#1\right\rVert}
\providecommand{\scal}[2]{\left<#1,#2\right>}
\newtheorem{theorem}{Theorem}[section]
\newtheorem{lemma}[theorem]{Lemma}
\newtheorem{proposition}[theorem]{Proposition}
\newtheorem{corollary}[theorem]{Corollary}
\theoremstyle{definition}
\theoremstyle{remark}
\newtheorem{remark}[theorem]{Remark}
\newtheorem*{remark*}{Remark}
\newtheoremstyle{named}{}{}{\itshape}{}{\bfseries}{.}{.5em}{#1 \thmnote{#3}}
\theoremstyle{named}
\numberwithin{equation}{section}
\title[]{Orthogonality relations for Poincar\'e series}
\author{Sonja \v Zunar}
\address{University of Zagreb,
	Faculty of Geodesy,
	Ka\v ci\'ceva 26,
	10000 Zagreb,
	Croatia}
\email{szunar@geof.hr}
\subjclass[2020]{11F70, 11F46}
\thanks{This work is supported by the Croatian Science Foundation under the project number HRZZ-IP-2022-10-4615.}
\keywords{Poincar\'e series, Petersson inner product, orthogonality relations, automorphic forms, Siegel cusp forms}
\begin{document}

\begin{abstract}
	Let $ G $ be a connected semisimple Lie group with finite center. We prove a formula for the inner product of two cuspidal automorphic forms on $ G $ that are given by Poincar\'e series of $ K $-finite matrix coefficients of an integrable discrete series representation of $ G $. As an application, we give a new proof of a well-known result on the Petersson inner product of certain vector-valued Siegel cusp forms. In this way, we extend results previously obtained by G.\ Mui\'c for cusp forms on the upper half-plane, i.e., in the case when $ G=\SL_2(\R) $.
\end{abstract}

\maketitle

\section{Introduction}

In \cite{muic12}, G.\ Mui\'c gave a representation-theoretic proof of a formula for the Petersson inner product of two cusp forms in $ S_m(\Gamma) $, where $ m\in\Z_{\geq3} $, $ \Gamma $ is a discrete subgroup of finite covolume in $ \SL_2(\R) $, and one of the cusp forms is given by a Poincar\'e series of polynomial type that lifts in a standard way to a Poincar\'e series of a $ K $-finite matrix coefficient of an integrable discrete series representation of $ \SL_2(\R) $. Let $ G $ be a connected semisimple Lie group with finite center. In this paper, by generalizing the methods of \cite{muic12}, we prove a formula for the inner product of two cuspidal automorphic forms on $ G $ that are given by Poincar\'e series of $ K $-finite matrix coefficients of an integrable discrete series representation of $ G $. As an application, we give a new proof of a well-known result on the Petersson inner product of certain vector-valued Siegel cusp forms. 

To explain our results in more detail, let us fix a maximal compact subgroup $ K $ of $ G $ and a Haar measure $ dg $ on $ G $. Let $ \mathfrak g $ denote the Lie algebra of $ G $, and let $ \mathcal Z(\mathfrak g) $ denote the center of the universal enveloping algebra $ \mathcal U(\mathfrak g) $ of the complex Lie algebra $ \mathfrak g_\C=\mathfrak g\otimes_\R\C $. Let $ (\pi,H) $ be a discrete series representation of $ G $, and let $ H_K $ denote the $ (\mathfrak g,K) $-module of $ K $-finite vectors in $ H $.

It is well known that for all $ h,h'\in H $, the matrix coefficient $ c_{h,h'}:G\to\C $,
\begin{equation}\label{eq:003}
	c_{h,h'}(g)=\scal{\pi(g)h}{h'}_H, 
\end{equation}
belongs to $ L^2(G) $ \cite[\S4.5.9]{warner72}. Moreover, by the Schur orthogonality relations \cite[Theorem 4.5.9.3]{warner72}, we have
\begin{equation}\label{eq:005}
	\scal{c_{h,h'}}{c_{v,v'}}_{L^2(G)}=\frac1{d(\pi)}\scal{h}{v}_H\scal{v'}{h'}_H,\qquad h,h',v,v'\in H,  
\end{equation}
where $ d(\pi)\in\R_{>0} $ is the formal degree of $ \pi $, i.e., the Plancherel measure of $ \pi $ \cite[Proposition 18.8.5]{dixmier77}. On the other hand, if $ (\sigma,V) $ is a discrete series representation of $ G $ that is not equivalent to $ \pi $, then we have
\[ \scal{c_{h,h'}}{c_{v,v'}}_{L^2(G)}=0,\qquad h,h'\in H,\ v,v'\in V.  \]

Next, suppose that $ (\pi,H) $ belongs to the integrable discrete series of $ G $, i.e., we have $ c_{h,h'}\in L^1(G) $ for all $ h,h'\in H_K $. Let $ \Gamma $ be a discrete subgroup of $ G $. Then, given $ h,h'\in H_K $, the Poincar\'e series 
\[ \left(P_\Gamma c_{h,h'}\right)(g)=\sum_{\gamma\in\Gamma}c_{h,h'}(\gamma g) \]
converges absolutely and uniformly on compact subsets of $ G $ and defines a smooth, left $ \Gamma $-invariant, $ \mathcal Z(\mathfrak g) $-finite, right $ K $-finite function that belongs to $ L^p(\Gamma\backslash G) $ for every $ p\in\left[1,\infty\right] $ (see, e.g., \cite[Lemma 3.2]{zunar25b}). If $ G=\mathcal G(\R) $ for some connected semisimple algebraic group $\mathcal G $ defined over $ \Q $, and $ \Gamma $ is an arithmetic subgroup of $ \mathcal G(\Q) $, then the function $ P_\Gamma c_{h,h'} $ is a cuspidal automorphic form for $ \Gamma $ \cite[Lemma 4-2(i)]{muic16}.
D.\ Mili\v ci\'c proved that
\[ \mathrm{span}_\C\left\{P_\Gamma c_{h,h'}:h,h'\in H_K\right\} \]
is dense in the $ \pi $-isotypic component $ L^2(\Gamma\backslash G)_{[\pi]} $ of the right regular representation of $ G $ on $ L^2(\Gamma\backslash G) $ \cite[Lemma 6.6]{muic19}.

The main result of this paper is the following generalization of Schur orthogonality relations.

\begin{theorem}\label{thm:001}
	Let $ G $ be a connected semisimple Lie group with finite center and a fixed Haar measure $ dg $, and let $ \Gamma $ be a discrete subgroup of $ G $. Let $ (\pi,H) $ be an integrable discrete series representation of $ G $. Then, we have the following:
	\begin{enumerate}[label=\textup{(\roman*)}]
		\item\label{thm:001:1} \textbf{\textup{(Inner product formula)}} For all $ h,h',v,v'\in H_K $, we have 
		\begin{equation}\label{eq:008}
			\scal{P_\Gamma c_{h,h'}}{P_\Gamma c_{v,v'}}_{L^2(\Gamma\backslash G)}=\frac1{d(\pi)}\scal{h}{v}_H\left(P_{\Gamma}c_{v',h'}\right)(1_G), 
		\end{equation}
		where $ 1_G $ denotes the identity element of $ G $, and $ d(\pi) $ denotes the formal degree of $ \pi $.
		\item\label{thm:001:2} Let $ (\sigma,V) $ be an integrable discrete series representation of $ G $ that is not equivalent to $ \pi $. Then, for all $ h,h'\in H_K $ and $ v,v'\in V_K $, we have
		\[ \scal{P_\Gamma c_{h,h'}}{P_\Gamma c_{v,v'}}_{L^2(\Gamma\backslash G)}=0. \]
	\end{enumerate}
\end{theorem}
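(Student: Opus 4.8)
The plan is to reduce the statement to the Schur orthogonality relations \eqref{eq:005} on $ L^2(G) $ by an unfolding argument. Writing the Petersson inner product as an integral over a fundamental domain and expanding the \emph{second} Poincar\'e series, I would first establish the unfolding identity
\[
	\scal{P_\Gamma c_{h,h'}}{P_\Gamma c_{v,v'}}_{L^2(\Gamma\backslash G)}=\int_G (P_\Gamma c_{h,h'})(g)\,\overline{c_{v,v'}(g)}\,dg,
\]
which is valid because $ P_\Gamma c_{h,h'}\in L^\infty(\Gamma\backslash G) $ (cited above) while $ c_{v,v'}\in L^1(G) $ (both $ \pi $ and $ \sigma $ being integrable). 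Since $ P_\Gamma c_{h,h'} $ is already left $ \Gamma $-invariant and bounded, the standard unfolding lemma for an $ L^\infty $ function integrated against a Poincar\'e series of an $ L^1 $ function applies directly.

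Next I would expand $ P_\Gamma c_{h,h'}(g)=\sum_{\gamma\in\Gamma}c_{h,h'}(\gamma g) $ and interchange the sum with the integral over $ G $, so that the inner product becomes $ \sum_{\gamma\in\Gamma}\int_G c_{h,h'}(\gamma g)\,\overline{c_{v,v'}(g)}\,dg $. The key elementary identity is $ c_{h,h'}(\gamma g)=c_{h,\pi(\gamma^{-1})h'}(g) $, obtained from the unitarity of $ \pi $, which turns each summand into the pairing $ \scal{c_{h,\pi(\gamma^{-1})h'}}{c_{v,v'}}_{L^2(G)} $. For part~\ref{thm:001:1}, applying \eqref{eq:005} gives $ \frac1{d(\pi)}\scal{h}{v}_H\scal{v'}{\pi(\gamma^{-1})h'}_H $; pulling out the constant and rewriting $ \scal{v'}{\pi(\gamma^{-1})h'}_H=\scal{\pi(\gamma)v'}{h'}_H=c_{v',h'}(\gamma) $, the remaining sum $ \sum_{\gamma\in\Gamma}c_{v',h'}(\gamma) $ is exactly $ (P_\Gamma c_{v',h'})(1_G) $, which yields \eqref{eq:008}. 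For part~\ref{thm:001:2}, the same chain applies, but now each $ L^2(G) $-pairing of a matrix coefficient of $ \pi $ with one of $ \sigma\not\cong\pi $ vanishes, so the entire sum is zero.

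The main obstacle is justifying the two interchanges of summation and integration. Both the unfolding step and the subsequent Fubini interchange reduce to the finiteness of $ \sum_{\gamma\in\Gamma}\int_G \abs{c_{h,h'}(\gamma g)}\,\abs{c_{v,v'}(g)}\,dg $, i.e.\ to the integrability of $ g\mapsto (P_\Gamma\abs{c_{h,h'}})(g)\,\abs{c_{v,v'}(g)} $. Here I would rely on the fact, which underlies the $ L^\infty $-bound in \cite[Lemma 3.2]{zunar25b}, that the majorant $ P_\Gamma\abs{c_{h,h'}} $ is itself a bounded function on $ \Gamma\backslash G $; combined with $ c_{v,v'}\in L^1(G) $, this bounds the double integral by $ \norm{P_\Gamma\abs{c_{h,h'}}}_{\infty}\,\norm{c_{v,v'}}_{L^1(G)}<\infty $, legitimizing both interchanges by Tonelli's theorem. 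Once these analytic points are settled, the coefficient shift and the resummation are routine.
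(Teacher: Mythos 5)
Your argument is correct, but it takes a genuinely different route from the paper after the common first step. Both proofs begin by unfolding the second Poincar\'e series to obtain $\int_G(P_\Gamma c_{h,h'})(g)\,\overline{c_{v,v'}(g)}\,dg$ (the paper's \eqref{eq:009}). From there the paper recognizes this integral as $\bigl(R(\overline{c_{v,v'}})P_\Gamma c_{h,h'}\bigr)(1_G)$, computes the convolution identity $R(\overline{c_{v,v'}})c_{h,h'}=\frac1{d(\pi)}\scal hv_Hc_{v',h'}$ on $L^2(G)$ (Lemma \ref{lem:005}), and transports it to $L^2(\Gamma\backslash G)$ via the unitary $G$-equivalence $\Phi^\Gamma_{h'}\circ\Phi_{h'}^{-1}$ supplied by Lemma \ref{lem:004} and Proposition \ref{lem:002}, the latter resting on the irreducibility result of Mui\'c; part \ref{thm:001:2} is then deduced from orthogonality of isotypic components. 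You instead expand the remaining Poincar\'e series, interchange sum and integral a second time, and apply the Schur orthogonality relations \eqref{eq:005} termwise to $\scal{c_{h,\pi(\gamma^{-1})h'}}{c_{v,v'}}_{L^2(G)}$, resumming $\sum_{\gamma}c_{v',h'}(\gamma)=(P_\Gamma c_{v',h'})(1_G)$ at the end; part \ref{thm:001:2} falls out of the same computation since each term vanishes when $\sigma\not\cong\pi$. Your route is more elementary and self-contained: it bypasses Lemmas \ref{lem:006}, \ref{lem:004}, \ref{lem:005} and Proposition \ref{lem:002} entirely, and in particular does not need the irreducibility or unitary equivalence of $\mathrm{Cl}_{L^2(\Gamma\backslash G)}P_\Gamma c_{H_K,h'}$. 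The price is a slightly stronger analytic input: you need the boundedness of the majorant $P_\Gamma\abs{c_{h,h'}}$, not merely of $P_\Gamma c_{h,h'}$; this is indeed what the proof of the cited convergence lemma establishes (the $L^\infty$ bound there is obtained from the absolute series), but it is worth stating explicitly, as the paper only formally quotes $P_\Gamma c_{h,h'}\in L^\infty(\Gamma\backslash G)$. The paper's longer route has the compensating advantage that Proposition \ref{lem:002} — the unitary structure of the map $h\mapsto P_\Gamma c_{h,h'}$ — is of independent interest and is the kind of statement one wants available for the application in Section \ref{sec:110}.
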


Our proof of Theorem \ref{thm:001} is inspired by \cite[proof of Theorem 2.3]{muic12} and relies on the results on Poincar\'e series $ P_\Gamma c_{h,h'} $ proved in \cite{muic19}.

As an application of the inner product formula \eqref{eq:008}, in Section \ref{sec:110} we give a representation-theoretic proof of a well-known formula for the Petersson inner product of certain vector-valued Siegel cusp forms (see, e.g., \cite[Lemma 1.2]{zunar25a}). To explain this in more detail, let $ n\in\Z_{>0} $, and let $ (\rho,V) $ be an irreducible polynomial representation of $ \GL_n(\C) $ of highest weight $ \omega=(\omega_1,\ldots,\omega_n)\in\Z^n $, where $ \omega_1\geq\ldots\geq\omega_n>2n $. 
We equip $ V $ with the Hermitian inner product $ \scal\spacedcdot\spacedcdot_V $ with respect to which the restriction $ \rho\big|_{\U(n)} $ is unitary. Let $ \Gamma $ be a discrete subgroup of $ \Sp_{2n}(\R) $ that is commensurable with $ \Sp_{2n}(\Z) $.

A Siegel cusp form of weight $ \rho $ for $ \Gamma $ is a $ V $-valued holomorphic function $ \phi $ on the Siegel upper half-space
\[ \calH_n=\left\{z=x+iy\in M_n(\C):z^\top=z\text{ and }y>0\right\} \]
with the following two properties:
\begin{enumerate}
	\item $ \phi((Az+B)(Cz+D)^{-1})=\rho(Cz+D)\,\phi(z) $ for all $ \gamma=\begin{pmatrix}A&B\\C&D\end{pmatrix}\in\Gamma $ and $ z\in\calH_n $.
	\item $ \sup_{z\in\calH_n}\norm{\rho\left(y^{\frac12}\right)\phi(z)}_V<\infty $.
\end{enumerate}
Siegel cusp forms of weight $ \rho $ for $ \Gamma $ constitute a finite-dimensional complex vector space $ S_\rho(\Gamma) $, which we equip with the Petersson inner product
\[ \scal{\phi_1}{\phi_2}_{S_\rho(\Gamma)}=\frac1{\abs{\Gamma\cap\left\{\pm I_{2n}\right\}}}\int_{\Gamma\backslash\calH_n}\scal{\rho(y)\phi_1(z)}{\phi_2(z)}\,\det y^{-n-1}\hspace{-1.5mm}\prod_{1\leq r\leq s\leq n}dx_{r,s}\,dy_{r,s}. \]
It is well known (see, e.g., \cite[Theorem 1.1]{zunar25a}) that the space $ S_\rho(\Gamma) $ is spanned by the absolutely and locally uniformly convergent Poincar\'e series
\[ \begin{aligned}
	\left(P_{\Gamma,\rho}f_{\mu,v}\right)(z)=\sum_{\left(\begin{smallmatrix}A&B\\C&D\end{smallmatrix}\right)\in\Gamma}&\mu\bigg(\Big(Az+B-i(Cz+D)\Big)\Big(Az+B+i(Cz+D)\Big)^{-1}\bigg)\\
	&\cdot\rho\left(\frac1{2i}\Big(Az+B+i(Cz+D)\Big)\right)^{-1}v,
\end{aligned} \]
where $ v $ goes over $ V $ and $ \mu $ goes over the ring $ \C[X_{r,s}:1\leq r,s\leq n] $ of polynomials with complex coefficients in the $ n^2 $ variables $ X_{r,s} $ with $ r,s\in\{1,\ldots,n\} $. In \cite{zunar25a}, we showed that these Poincar\'e series lift in a standard way to Poincar\'e series of $ K $-finite matrix coefficients of an integrable antiholomorphic discrete series representation $ \pi_\rho^* $ of $ \Sp_{2n}(\R) $. By using a variant of this lift to transfer the inner product formula \eqref{eq:008} from $ L^2(\Gamma\backslash\Sp_{2n}(\R)) $ to $ S_\rho(\Gamma) $, we obtain the following well-known result that extends \cite[Corollary 1.2]{muic12} and \cite[Corollary 1.4]{zunar23} and illuminates the connection of the Siegel cusp forms $ P_{\Gamma,\rho}f_{1,v} $ with the reproducing kernel function for $ S_\rho(\Gamma) $ (cf.\ \cite{godement57}), as described in detail in \cite[Lemma 1.2 and \S7]{zunar25a}.

\begin{corollary}\label{cor:111}
	Let $ \phi\in S_\rho(\Gamma) $ and $ v\in V $. Then, we have
	\[ \scal{\phi}{P_{\Gamma,\rho}f_{1,v}}_{S_\rho(\Gamma)}=\frac{\dim_\C V}{d(\pi_\rho)}\scal{\phi(iI_n)}v_{V}. \]
\end{corollary}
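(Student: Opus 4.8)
The plan is to transfer the inner product formula \eqref{eq:008} from $ L^2(\Gamma\backslash\Sp_{2n}(\R)) $ to $ S_\rho(\Gamma) $ through the standard lift of \cite{zunar25a}, and then to read the point evaluation off the right-hand side of \eqref{eq:008}. Put $ G=\Sp_{2n}(\R) $, $ K=\U(n) $, and let $ \pi=\pi_\rho^* $ be the integrable antiholomorphic discrete series attached to $ \rho $; its minimal $ K $-type is a copy $ W\subseteq H_K $ of $ V $, realized via $ \rho $. Both sides of the asserted identity are linear in $ \phi $ and conjugate-linear in $ v $, and $ S_\rho(\Gamma) $ is spanned by the Poincar\'e series $ P_{\Gamma,\rho}f_{\mu,w} $; hence it suffices to prove the formula for $ \phi=P_{\Gamma,\rho}f_{\mu,w} $ with $ \mu $ a monomial and $ w\in V $.

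First I would recall from \cite{zunar25a} the explicit form of the lift. It carries each Siegel Poincar\'e series $ P_{\Gamma,\rho}f_{\mu,u} $ to a Poincar\'e series $ P_\Gamma c_{h,h'} $ of a $ K $-finite matrix coefficient of $ \pi $, with the vector $ u\in V $ encoded in the minimal-$ K $-type slot $ h'\in W $ and the polynomial $ \mu $ encoded in the slot $ h $; moreover, it intertwines the Petersson product with the $ L^2(\Gamma\backslash G) $ product up to an explicit positive scalar fixed by the chosen Haar and Petersson measures. In particular, for $ \mu=1 $ the series $ P_{\Gamma,\rho}f_{1,v} $ lifts to $ P_\Gamma c_{a,b_v} $, where $ b_v\in W $ corresponds to $ v $ and $ a $ is the fixed lowest weight vector generating $ W $.

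Writing the lift of $ \phi $ as $ P_\Gamma c_{h,h'} $, I would then apply \eqref{eq:008} to obtain
\[ \scal{P_\Gamma c_{h,h'}}{P_\Gamma c_{a,b_v}}_{L^2(\Gamma\backslash G)}=\frac1{d(\pi)}\scal{h}{a}_H\left(P_\Gamma c_{b_v,h'}\right)(1_G). \]
The point is that $ c_{b_v,h'} $ is a matrix coefficient between two minimal-$ K $-type vectors, so under the inverse lift $ \left(P_\Gamma c_{b_v,h'}\right)(1_G) $ is again a weight-$ \rho $ value at the base point $ iI_n $; combined with the scalar $ \scal{h}{a}_H $, which extracts the contribution of $ \mu $ through the fixed vector $ a $, this reconstructs the evaluation $ \scal{\phi(iI_n)}{v}_V $ up to a constant. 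The factor $ \dim_\C V $ enters when the scalar-valued matrix coefficients are reassembled into the $ V $-valued evaluation, namely from summing over an orthonormal basis of the minimal $ K $-type $ W\cong V $; and since $ \pi_\rho^* $ is contragredient to $ \pi_\rho $, their formal degrees agree, $ d(\pi_\rho^*)=d(\pi_\rho) $.

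The representation-theoretic heart of the argument is immediate once the lift is available: formula \eqref{eq:008} with its second slot in the minimal $ K $-type is a reproducing identity, so pairing with $ P_{\Gamma,\rho}f_{1,v} $ must return a point evaluation. I expect the only real difficulty to lie in the constant bookkeeping — reconciling the normalization of the lift, the Haar measure on $ G $, and the minimal-$ K $-type inner products of \cite{zunar25a} so that every factor except $ \dim_\C V/d(\pi_\rho) $ cancels, and confirming that the sum over a basis of $ W $ contributes exactly $ \dim_\C V $ and not some neighboring $ K $-type constant.
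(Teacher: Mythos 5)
Your overall strategy is the paper's: lift to $L^2(\Gamma\backslash\Sp_{2n}(\R))$ via \cite{zunar25a}, apply \eqref{eq:008}, sum over an orthonormal basis of $V$ to produce the factor $\dim_\C V$, and use $d(\pi_\rho^*)=d(\pi_\rho)$. But the middle of your argument misstates the structure of the lift, and the step where you ``reconstruct'' the point evaluation does not go through as written. The lift of a Siegel cusp form $\phi=P_{\Gamma,\rho}f$ is the $V$-valued function $P_\Gamma F_f$, not a single scalar Poincar\'e series $P_\Gamma c_{h,h'}$; its $j$-th component is $C_\rho^{-1}P_\Gamma c_{f_{1,e_j}^*,\,f^*}$ by \eqref{eq:054}. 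In particular the \emph{second} slot carries the full data $f^*$ of the cusp form (both $\mu$ and $u$ together, if $f=f_{\mu,u}$), while the \emph{first} slot ranges over the minimal-$K$-type vectors $f_{1,e_j}^*$ indexing the components of $V$. You instead distribute $\mu$ into the first slot and $u$ into the second, so that applying \eqref{eq:008} yields $\frac1{d(\pi)}\scal{h}{a}_H\left(P_\Gamma c_{b_v,h'}\right)(1_G)$, a product of a quantity depending only on $\mu$ with one depending only on $(v,w)$. The target $\scal{\phi(iI_n)}{v}_V=\bigl(P_\Gamma v^*F_{f_{\mu,w}}\bigr)(1_{G})$ does not factor this way, so no choice of normalizing constants can rescue this intermediate formula.

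In the correct computation the roles are reversed: both Poincar\'e series being paired have the \emph{same} first slot $f_{1,e_j}^*$, so \eqref{eq:008} produces $\norm{f_{1,e_j}^*}_{H_\rho^*}^2=C_\rho\norm{e_j}_V^2=C_\rho$ (a constant, not a $\mu$-dependent pairing), and the entire dependence on $\phi$ and $v$ sits in the single evaluation $\bigl(P_\Gamma c_{f_{1,v}^*,f^*}\bigr)(1_{G})=C_\rho\bigl(P_\Gamma v^*F_f\bigr)(1_{G})=C_\rho\scal{\phi(iI_n)}{v}_V$. Summing over $j$ then gives $\dim_\C V$ and the three factors of $C_\rho$ cancel against the $C_\rho^{-2}$ coming from the two applications of \eqref{eq:054}. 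So the ``constant bookkeeping'' you defer is not the only difficulty: you first need to set up the lift with the slots assigned as above, otherwise the reproducing identity never produces the point evaluation.
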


\section{Proof of the inner product formula}

Let $ G $ be a connected semisimple Lie group with finite center, and let $ \Gamma $ be a discrete subgroup of $ G $. 
We fix a Haar measure $ dg $ on $ G $, which induces the $ G $-invariant Radon measure on $ \Gamma\backslash G $ such that
\begin{equation}\label{eq:105}
	\int_{\Gamma\backslash G}\sum_{\gamma\in\Gamma}f(\gamma g)\,dg=\int_G f(g)\,dg 
\end{equation}
for every compactly supported, continuous function $ f:G\to\C $
\cite[Theorem 8.36]{knapp02}.
Given a finite-dimensional complex Hilbert space $ V $ and $ p\in\left[1,\infty\right] $, let $ L^p(\Gamma\backslash G,V) $ denote the $ L^p $ space of functions $ \Gamma\backslash G\to V $. We write $ L^p(\Gamma\backslash G)=L^p(\Gamma\backslash G,\C) $.
Given an irreducible unitary representation $ \pi $ of $ G $, let $ L^2(\Gamma\backslash G)_{[\pi]} $ denote the $ \pi $-isotypic component of the right regular representation $ \left(R,L^2(\Gamma\backslash G)\right) $, i.e., the closure of the sum of closed irreducible $ G $-invariant subspaces of $ L^2(\Gamma\backslash G) $ that are equivalent to $ \pi $.

Given a unitary representation $ (\pi,H) $ of $ G $ and a function $ f\in L^1(G) $, a continuous linear operator $ \pi(f):H\to H $ is standardly defined by the condition
\[ \scal{\pi(f)h}{h'}_H=\int_Gf(g)\scal{\pi(g)h}{h'}_H\,dg,\qquad h,h'\in H, \]
where $ \scal\spacedcdot\spacedcdot_H $ denotes the inner product on $ H $ \cite[(1.8b)]{knapp86}.
It is well known that for all $ f\in L^1(G) $ and $ \varphi\in L^2(\Gamma\backslash G) $, we have
\begin{equation}\label{eq:002}
	\left(R(f)\varphi\right)(g)=\int_G f(g')\,\varphi(gg')\,dg'\qquad\text{for a.a.\ }g\in G. 
\end{equation}
Moreover, by the dominated convergence theorem, we have the following lemma.

\begin{lemma}\label{lem:006}
	Let $ f\in L^1(G) $, and let $ \varphi:\Gamma\backslash G\to\C $ be a bounded, continuous function that belongs to $ L^2(\Gamma\backslash G) $. Then, the right-hand side of \eqref{eq:002} defines a continuous function $ G\to\C $.
\end{lemma}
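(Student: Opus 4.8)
The plan is to apply the dominated convergence theorem directly, exactly as the sentence preceding the lemma anticipates. First I would regard $ \varphi $ as its pullback to $ G $, a bounded continuous function on $ G $ that is left $ \Gamma $-invariant, and set $ M=\sup_G\abs\varphi<\infty $. Denote by $ F(g)=\int_G f(g')\,\varphi(gg')\,dg' $ the function on the right-hand side of \eqref{eq:002}. For each fixed $ g\in G $ the integrand is bounded in absolute value by $ M\abs{f(g')} $, which is integrable because $ f\in L^1(G) $; hence $ F(g) $ is well-defined and finite.

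To establish continuity, I would fix $ g\in G $ together with a sequence $ g_n\to g $, which is legitimate since $ G $, being a Lie group, is metrizable, so continuity can be tested along sequences. For every fixed $ g'\in G $, the continuity of the multiplication map $ G\times G\to G $ composed with the continuity of $ \varphi $ gives $ \varphi(g_n g')\to\varphi(gg') $; thus the integrands $ f(g')\,\varphi(g_n g') $ converge pointwise to $ f(g')\,\varphi(gg') $ as $ n\to\infty $. Crucially, all these integrands are dominated by the single integrable function $ g'\mapsto M\abs{f(g')} $, with a bound independent of $ n $, thanks to the boundedness of $ \varphi $. The dominated convergence theorem then yields $ F(g_n)\to F(g) $, so $ F $ is continuous at $ g $; since $ g $ was arbitrary, $ F $ is continuous on $ G $.

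I do not expect a genuine obstacle here, as the argument is a textbook application of dominated convergence. The only two points that require any attention are the uniformity of the dominating function $ M\abs f $ over $ n $, which is exactly what the hypothesis that $ \varphi $ be bounded provides, and the reduction of continuity to sequential continuity, which is justified by the metrizability of $ G $. Both the boundedness assumption on $ \varphi $ and its continuity are used in an essential way, so the hypotheses of the lemma are precisely what make this clean DCT argument go through.
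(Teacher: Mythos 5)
Your proof is correct and matches the paper's approach exactly: the paper offers no written proof beyond the remark that the lemma follows from the dominated convergence theorem, and your argument is precisely the intended DCT application, with the boundedness of $\varphi$ supplying the dominating function $M\abs{f}$.
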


	In what follows, we use the notation $ c_{h,h'} $ (resp., $ d(\pi) $) introduced in \eqref{eq:003} (resp., \eqref{eq:005}) for a matrix coefficient (resp., the formal degree) of a discrete series representation $ \pi $ of $ G $. 
	The following lemma is an immediate corollary of \eqref{eq:005}.

	\begin{lemma}\label{lem:004}
	Let $ (\pi,H) $ be a discrete series representation of $ G $. Let $ h'\in H\setminus\left\{0\right\} $. Then, the assignment 
	\[ h\mapsto \frac{d(\pi)^{\frac12}}{\norm{h'}_H}c_{h,h'} \]
	defines a unitary $ G $-equivalence $ \Phi_{h'} $ from $ H $ to the irreducible closed $ G $-invariant subspace
	\[ c_{H,h'}:=\left\{c_{h,h'}:h\in H\right\} \]
	of the right regular representation $ \left(R,L^2(G)\right) $.
	\end{lemma}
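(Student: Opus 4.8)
The plan is to read off every assertion directly from the Schur orthogonality relations \eqref{eq:005}, of which the lemma is essentially a repackaging. First I would record that the assignment $\Phi_{h'}\colon h\mapsto \frac{d(\pi)^{1/2}}{\norm{h'}_H}c_{h,h'}$ is $\C$-linear: since $c_{h,h'}(g)=\scal{\pi(g)h}{h'}_H$ depends linearly on $h$, the element $c_{h,h'}\in L^2(G)$ does too, and multiplication by the fixed positive scalar $d(\pi)^{1/2}/\norm{h'}_H$ preserves linearity.

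Next I would verify that $\Phi_{h'}$ is an isometry. Specializing \eqref{eq:005} to the case $v'=h'$ gives, for all $h,v\in H$,
\[ \scal{c_{h,h'}}{c_{v,h'}}_{L^2(G)}=\frac1{d(\pi)}\scal{h}{v}_H\,\scal{h'}{h'}_H=\frac{\norm{h'}_H^2}{d(\pi)}\scal{h}{v}_H, \]
so that $\scal{\Phi_{h'}h}{\Phi_{h'}v}_{L^2(G)}=\frac{d(\pi)}{\norm{h'}_H^2}\scal{c_{h,h'}}{c_{v,h'}}_{L^2(G)}=\scal{h}{v}_H$; in particular $\Phi_{h'}$ is injective. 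For the $G$-equivariance I would use that for every $g_0\in G$ one has $c_{\pi(g_0)h,h'}(g)=\scal{\pi(gg_0)h}{h'}_H=c_{h,h'}(gg_0)=\bigl(R(g_0)c_{h,h'}\bigr)(g)$, whence $\Phi_{h'}\bigl(\pi(g_0)h\bigr)=R(g_0)\bigl(\Phi_{h'}h\bigr)$, i.e.\ $\Phi_{h'}$ intertwines $\pi$ with the right regular representation $R$ on $L^2(G)$.

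Finally I would identify the image and transport the representation-theoretic structure. The range of $\Phi_{h'}$ equals $c_{H,h'}$, since rescaling the argument $h$ by $\norm{h'}_H/d(\pi)^{1/2}$ absorbs the normalization constant; thus $\Phi_{h'}$ is a surjective isometry onto $c_{H,h'}$. As $H$ is complete and $\Phi_{h'}$ is an isometry, the range $c_{H,h'}$ is complete and hence closed in $L^2(G)$, while the equivariance just established makes it a $G$-invariant subspace. Consequently $\Phi_{h'}$ is a unitary $G$-equivalence of $H$ onto the closed $G$-invariant subspace $c_{H,h'}$, and since the discrete series representation $(\pi,H)$ is irreducible, so is $c_{H,h'}$. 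The only points that require care are purely a matter of bookkeeping: tracking the real positive normalization constant through the (conjugate-)linearity conventions of the two inner products, and observing that it is precisely the closedness and $G$-invariance of the range that upgrade the isometric intertwiner to a unitary $G$-equivalence onto a closed invariant subspace. I therefore expect no substantive obstacle beyond the correct application of \eqref{eq:005}.
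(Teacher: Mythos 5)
Your proposal is correct and is precisely the unpacking of what the paper itself asserts without proof, namely that the lemma ``is an immediate corollary of \eqref{eq:005}'': the isometry comes from specializing $v'=h'$ in the Schur orthogonality relations, the intertwining from $c_{\pi(g_0)h,h'}=R(g_0)c_{h,h'}$, and closedness and irreducibility of $c_{H,h'}$ are transported from $H$ along the resulting surjective isometry. No discrepancy with the paper's (implicit) argument.
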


Let $ \mathfrak g $ denote the Lie algebra of $ G $, and let $ K $ be a maximal compact subgroup of $ G $. Given a unitary representation $ (\pi,H) $ of $ G $, we denote by $ H_K $ the $ (\mathfrak g,K) $-module of $ K $-finite vectors in $ H $.

\begin{lemma}\label{lem:005}
	Let $ (\pi,H) $ be an integrable discrete series representation of $ G $. Let $ h,h'\in H $ and $ v,v'\in H_K $. Then, we have
	\begin{equation}\label{eq:006}
		R(\overline{c_{v,v'}})c_{h,h'}=\frac1{d(\pi)}\scal{h}{v}_Hc_{v',h'}. 
	\end{equation}
\end{lemma}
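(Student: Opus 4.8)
The plan is to evaluate $ R(\overline{c_{v,v'}})c_{h,h'} $ by means of the integral formula \eqref{eq:002} and to recognize the resulting integral, for each fixed $ g $, as an $ L^2(G) $-inner product of matrix coefficients, to which the Schur orthogonality relations \eqref{eq:005} apply. Since $ v,v'\in H_K $ and $ \pi $ is an integrable discrete series representation, we have $ \overline{c_{v,v'}}\in L^1(G) $, while $ c_{h,h'}\in L^2(G) $ is bounded and continuous; hence \eqref{eq:002} applies, and by Lemma \ref{lem:006} the function
\[ g\longmapsto\int_G\overline{c_{v,v'}(g')}\,c_{h,h'}(gg')\,dg' \]
is continuous and agrees with $ R(\overline{c_{v,v'}})c_{h,h'} $ for almost all $ g\in G $.

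The key step is the elementary identity $ c_{h,h'}(gg')=c_{h,\pi(g^{-1})h'}(g') $, which follows from $ \pi(gg')=\pi(g)\pi(g') $ together with the unitarity of $ \pi $. Substituting it into the integral above, I would observe that for each fixed $ g\in G $ the integrand is the product of $ \overline{c_{v,v'}}\in L^1(G) $ with the bounded function $ c_{h,\pi(g^{-1})h'} $, so the integral converges absolutely and equals $ \scal{c_{h,\pi(g^{-1})h'}}{c_{v,v'}}_{L^2(G)} $.

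Applying \eqref{eq:005} with $ (h,\pi(g^{-1})h') $ in place of $ (h,h') $ then gives
\[ \scal{c_{h,\pi(g^{-1})h'}}{c_{v,v'}}_{L^2(G)}=\frac1{d(\pi)}\scal{h}{v}_H\,\scal{v'}{\pi(g^{-1})h'}_H, \]
and a final use of unitarity, $ \scal{v'}{\pi(g^{-1})h'}_H=\scal{\pi(g)v'}{h'}_H=c_{v',h'}(g) $, shows that the integral equals $ \frac1{d(\pi)}\scal{h}{v}_H\,c_{v',h'}(g) $ for every $ g\in G $. Since the right-hand side is continuous, combining this with the almost-everywhere identity from the first paragraph yields \eqref{eq:006}.

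I expect the only genuine subtlety to lie in the interplay between the almost-everywhere validity of \eqref{eq:002} and the pointwise computation: one must verify that the defining integral converges absolutely for each individual $ g $ (so that Schur orthogonality may legitimately be invoked with the fixed second argument $ \pi(g^{-1})h' $), and then invoke the continuity provided by Lemma \ref{lem:006} to identify the continuous representatives on both sides. The remaining content---two applications of unitarity and one application of \eqref{eq:005}---is routine bookkeeping.
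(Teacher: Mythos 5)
Your proposal is correct and follows essentially the same route as the paper: apply \eqref{eq:002} and Lemma \ref{lem:006} to get the continuous representative, rewrite $ c_{h,h'}(gg')=c_{h,\pi(g)^{-1}h'}(g') $, recognize the integral as the $ L^2(G) $-inner product $ \scal{c_{h,\pi(g)^{-1}h'}}{c_{v,v'}}_{L^2(G)} $, and conclude via the Schur orthogonality relations \eqref{eq:005}. The extra care you take about absolute convergence and the identification of continuous representatives is sound but does not change the argument.
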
		

\begin{proof}
	By \eqref{eq:002} and Lemma \ref{lem:006}, the continuous representative of the equivalence class $ R\left(\overline{c_{v,v'}}\right)c_{h,h'}\in L^2(G) $ is given by
	\begin{align*}
		\left(R(\overline{c_{v,v'}})c_{h,h'}\right)(g)
		&\overset{\phantom{\eqref{eq:005}}}=\int_G\overline{c_{v,v'}(g')}\,c_{h,h'}(gg')\,dg'\\
		&\overset{\phantom{\eqref{eq:005}}}=\int_G\overline{c_{v,v'}(g')}\,c_{h,\pi(g)^{-1}h'}(g')\,dg'\\
		&\overset{\phantom{\eqref{eq:005}}}=\scal{c_{h,\pi(g)^{-1}h'}}{c_{v,v'}}_{L^2(G)}\\
		&\overset{\eqref{eq:005}}=\frac1{d(\pi)}\scal hv_H\scal{v'}{\pi(g)^{-1}h'}_H\\
		&\overset{\phantom{\eqref{eq:005}}}=\frac1{d(\pi)}\scal hv_Hc_{v',h'}(g),\qquad g\in G.\qedhere
	\end{align*}
\end{proof}

Let $ \mathcal Z(\mathfrak g) $ denote the center of the universal enveloping algebra $ \mathcal U(\mathfrak g) $ of the complex Lie algebra $ \mathfrak g_\C=\mathfrak g\otimes_\R\C $.
We recall that a function $ \varphi:G\to\C $ is said to be:
\begin{enumerate}
	\item right $ K $-finite if $ \dim_\C\mathrm{span}_\C\left\{\varphi(\spacedcdot k):k\in K\right\}<\infty $
	\item $ \mathcal Z(\mathfrak g) $-finite if $ \varphi $ is smooth and we have $ \dim_\C\mathcal Z(\mathfrak g)\varphi<\infty $.
\end{enumerate}
It is well known that, given an integrable discrete series representation $ (\pi,H) $ of $ G $ and $ h,h'\in H_K $, the Poincar\'e series 
\[ \left(P_\Gamma c_{h,h'}\right)(g)=\sum_{\gamma\in\Gamma}c_{h,h'}(\gamma g) \]
converges absolutely and uniformly on compact subsets of $ G $ and defines a $ \mathcal Z(\mathfrak g) $-finite, right $ K $-finite function on $ \Gamma\backslash G $ that belongs to $ L^p(\Gamma\backslash G) $ for every $ p\in[1,\infty] $ (cf.\ \cite[Lemma 3.2]{zunar25b}).

The following proposition is a variation of \cite[Theorem 6.4(i)]{muic19}.

\begin{proposition}\label{lem:002}
	Let $ (\pi,H) $ be an integrable discrete series representation of $ G $, and let $ h'\in H_K\setminus\left\{0\right\} $. We define the subspace
	\[ P_\Gamma c_{H_K,h'}=\left\{P_\Gamma c_{h,h'}:h\in H_K\right\} \]
	of $ L^2(\Gamma\backslash G) $.
	Then, exactly one of the following holds:
	\begin{enumerate}[label=\textup{(\roman*)}]
		\item $ P_\Gamma c_{H_K,h'}=0 $.
		\item\label{lem:002:2} The assignment 
		\begin{equation}\label{eq:007}
			h\mapsto \frac{\norm{h'}_H}{\norm{P_\Gamma c_{h',h'}}_{L^2(\Gamma\backslash G)}} P_\Gamma c_{h,h'} 
		\end{equation}
		defines a $ (\mathfrak g,K) $-module isomorphism $ H_K\to P_\Gamma c_{H_K,h'} $ that extends uniquely to a unitary $ G $-equivalence
		\[ \Phi^\Gamma_{h'}:H\to\mathrm{Cl}_{L^2(\Gamma\backslash G)}P_\Gamma c_{H_K,h'}. \]
	\end{enumerate}	
\end{proposition}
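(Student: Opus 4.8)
The plan is to exhibit the assignment $h\mapsto P_\Gamma c_{h,h'}$ as a homomorphism of $(\frakg,K)$-modules out of the irreducible Harish-Chandra module $H_K$, and then to compute a single inner product that simultaneously decides which alternative occurs and fixes the normalization in \eqref{eq:007}. First I would record that the map $\Phi_0\colon H_K\to L^2(\Gamma\backslash G)$, $\Phi_0(h)=P_\Gamma c_{h,h'}$, is $\C$-linear (matrix coefficients are linear in the first argument) and $(\frakg,K)$-equivariant: since $P_\Gamma$ commutes with right translations and, by Lemma \ref{lem:004}, $h\mapsto c_{h,h'}$ intertwines $\pi$ with the right regular representation, we get $R(k)P_\Gamma c_{h,h'}=P_\Gamma c_{\pi(k)h,h'}$ for $k\in K$ and $R(X)P_\Gamma c_{h,h'}=P_\Gamma c_{\pi(X)h,h'}$ for $X\in\frakg$. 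As $P_\Gamma c_{h,h'}$ is smooth and right $K$-finite, $\Phi_0$ is a genuine $(\frakg,K)$-homomorphism; because $H_K$ is irreducible, $\ker\Phi_0$ is either all of $H_K$ (alternative (i)) or $0$.

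The decisive step is the inner product computation. For $h,v\in H_K$ I would unfold the outer Poincar\'e series against the bounded, continuous, $\Gamma$-invariant function $P_\Gamma c_{v,h'}$ via \eqref{eq:105}, expand the latter as a sum over $\Gamma$, use $c_{v,h'}(\gamma g)=c_{v,\pi(\gamma)^{-1}h'}(g)$ (unitarity of $\pi$), and apply the Schur orthogonality relations \eqref{eq:005} termwise, obtaining
\[ \scal{P_\Gamma c_{h,h'}}{P_\Gamma c_{v,h'}}_{L^2(\Gamma\backslash G)}=\frac1{d(\pi)}\scal{h}{v}_H\sum_{\gamma\in\Gamma}\scal{\pi(\gamma)^{-1}h'}{h'}_H=\frac1{d(\pi)}\scal{h}{v}_H\,(P_\Gamma c_{h',h'})(1_G), \]
where the reindexing $\gamma\mapsto\gamma^{-1}$ identifies the sum with the value of the Poincar\'e series at $1_G$. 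The interchange of sum and integral is legitimate because $\sum_{\gamma\in\Gamma}\abs{c_{h',h'}(\gamma)}<\infty$, which is precisely the absolute convergence of the Poincar\'e series guaranteed for an integrable discrete series and $K$-finite $h'$. Taking $h=v=h'$ shows $(P_\Gamma c_{h',h'})(1_G)$ is real and nonnegative, equal to $d(\pi)\,\norm{P_\Gamma c_{h',h'}}_{L^2(\Gamma\backslash G)}^2/\norm{h'}_H^2$.

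From this identity the dichotomy is immediate. If $(P_\Gamma c_{h',h'})(1_G)=0$, then the displayed formula with $v=h$ gives $\norm{P_\Gamma c_{h,h'}}_{L^2(\Gamma\backslash G)}^2=0$ for every $h\in H_K$, which is alternative (i). Otherwise the normalizing constant in \eqref{eq:007} is finite and positive, and substituting the relation from the case $h=v=h'$ into the displayed formula shows that the normalized map $\Phi$ satisfies $\scal{\Phi(h)}{\Phi(v)}_{L^2(\Gamma\backslash G)}=\scal{h}{v}_H$; hence $\Phi$ is an isometric $(\frakg,K)$-module isomorphism of $H_K$ onto $P_\Gamma c_{H_K,h'}$. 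Being isometric on the dense subspace $H_K$, it extends uniquely to an isometry $\Phi^\Gamma_{h'}\colon H\to L^2(\Gamma\backslash G)$ with image $\mathrm{Cl}_{L^2(\Gamma\backslash G)}P_\Gamma c_{H_K,h'}$.

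I expect the main obstacle to be the final assertion, that this isometric extension is $G$-equivariant and not merely $K$- and $\frakg$-equivariant on $K$-finite vectors. The cleanest route is to factor $\Phi^\Gamma_{h'}$, up to a positive scalar, as $P_\Gamma\circ\Phi_{h'}$, where $\Phi_{h'}\colon H\to c_{H,h'}$ is the unitary $G$-equivalence of Lemma \ref{lem:004} and $P_\Gamma$, bounded on $c_{H_K,h'}$ by the inner product identity above, extends continuously to $c_{H,h'}$; since $R(g)P_\Gamma c_{h,h'}=P_\Gamma\bigl(R(g)c_{h,h'}\bigr)$ holds as an identity of convergent Poincar\'e series for every $g\in G$ and $h\in H_K$, a density argument promotes this extension to a $G$-intertwiner, forcing $\Phi^\Gamma_{h'}$ to intertwine $\pi$ with the right regular representation. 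Equivalently, one may invoke the standard globalization fact that a unitary isomorphism of the $(\frakg,K)$-modules of two unitary representations extends to a unitary $G$-equivalence, exactly as in \cite[Theorem 6.4(i)]{muic19}.
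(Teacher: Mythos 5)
Your proof is correct in substance but takes a genuinely different route to the normalization. The paper first invokes \cite[Theorem 6.4(i)]{muic19} to see that $\mathrm{Cl}_{L^2(\Gamma\backslash G)}P_\Gamma c_{H_K,h'}$ is an irreducible closed $G$-invariant subspace equivalent to $\pi$, obtains an \emph{a priori} unknown positive constant $c$ from Schur's lemma, and only then pins it down by evaluating the unitary map at $h'$. You instead prove directly, by unfolding via \eqref{eq:105} and applying the Schur orthogonality relations \eqref{eq:005} termwise, that
\[
\scal{P_\Gamma c_{h,h'}}{P_\Gamma c_{v,h'}}_{L^2(\Gamma\backslash G)}=\frac1{d(\pi)}\scal hv_H\left(P_\Gamma c_{h',h'}\right)(1_G),
\]
which is exactly the $v'=h'$ case of the inner product formula \eqref{eq:008}; from it you read off the dichotomy, the nonnegativity of $(P_\Gamma c_{h',h'})(1_G)$, and the isometry of the normalized map in one stroke. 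This is more self-contained and quantitative (it yields a special case of Theorem \ref{thm:001} as a byproduct, with no circularity since you do not use the proposition to prove it), at the cost of an analytic interchange that the paper's route avoids; both arguments still lean on the same globalization input (\cite[Theorem 6.4(i)]{muic19} or the equivalent fact that an isometric $(\frakg,K)$-isomorphism between $K$-finite vectors of unitary representations integrates to a unitary $G$-equivalence) for the final extension step.

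Two justifications should be tightened. First, your reason for the sum--integral interchange is not the right one: convergence of $\sum_{\gamma}\abs{c_{h',h'}(\gamma)}$ is a pointwise statement, whereas Fubini--Tonelli here needs $\int_G\abs{c_{h,h'}(g)}\sum_{\gamma}\abs{c_{v,h'}(\gamma g)}\,dg<\infty$, which follows from $c_{h,h'}\in L^1(G)$ together with the boundedness of $g\mapsto\sum_{\gamma}\abs{c_{v,h'}(\gamma g)}$ (the content of \cite[Lemma 3.2]{zunar25b}). Second, the identity $R(X)P_\Gamma c_{h,h'}=P_\Gamma c_{\pi(X)h,h'}$ for $X\in\frakg$ is not a formal consequence of ``$P_\Gamma$ commutes with right translations'': it requires differentiating the series term by term, which the paper justifies by the locally uniform convergence of the differentiated series; you should include that step rather than cite Lemma \ref{lem:004}, which only concerns the map $h\mapsto c_{h,h'}$ on $L^2(G)$.
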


\begin{proof}
	The assignment $ h\mapsto P_\Gamma c_{h,h'} $ is obviously $ K $-equivariant. To show that it is $ \mathfrak g $-equivariant, we note that, given $ h\in H_K $ and $ X\in\mathfrak g $, we have
	\[ \begin{aligned}
		\frac{d}{dt}\left(P_\Gamma c_{h,h'}\right)(g\exp(tX))
		&=\sum_{\gamma\in\Gamma}\frac{d}{dt}c_{h,h'}(\gamma g\exp(tX))\\
		&=\sum_{\gamma\in\Gamma}\frac{d}{ds}\bigg|_{s=0}c_{h,h'}(\gamma g\exp((t+s)X))\\
		&=\sum_{\gamma\in\Gamma}\frac{d}{ds}\bigg|_{s=0}\scal{\pi(\gamma g\exp(tX))\pi(\exp(sX))h}{h'}_H\\
		&=\sum_{\gamma\in\Gamma}\scal{\pi(\gamma g\exp(tX))\pi(X)h}{h'}_H\\
		&=\left(P_\Gamma c_{\pi(X)h,h'}\right)(g\exp(tX)),\qquad g\in G, 
	\end{aligned} \]
	where termwise differentiation in the first equality is valid since the resulting series converges absolutely and uniformly as $ t $ varies over any compact subset of $ \mathbb R $. Evaluating at $ t=0 $, we obtain
	\[ R(X)P_\Gamma c_{h,h'}=P_\Gamma c_{\pi(X)h,h'}. \]
	Thus, the assignment $ h\mapsto P_\Gamma c_{h,h'} $ defines a $ (\mathfrak g,K) $-equivariant map $ \Psi_{h'}^\Gamma:H_K\to P_\Gamma c_{H_K,h'} $.
	
	Suppose that $ P_\Gamma c_{H_K,h'}\neq0 $. Then, by the irreducibility of the $ (\mathfrak g,K) $-module $ H_K $, $ \Psi_{h'}^\Gamma $ is an isomorphism of $ (\mathfrak g,K) $-modules. Moreover, by \cite[Theorem 6.4(i)]{muic19}, $ \mathrm{Cl}_{L^2(\Gamma\backslash G)}P_\Gamma c_{H_K,h'} $ is an irreducible, closed, $ G $-invariant subspace of $ L^2(\Gamma\backslash G) $ that is unitarily equivalent to $ \pi $. By Schur's lemma \cite[\S3.3.2]{wallach88}, there exists $ c\in\R_{>0} $ such that the inifinitesimal equivalence $ c\,\Psi_{h'}^\Gamma $ of the $ G $-representations $ (\pi,H) $ and $ \mathrm{Cl}_{L^2(\Gamma\backslash G)}P_\Gamma c_{H_K,h'}  $ extends to a unitary $ G $-equivalence $ \Phi^\Gamma_{h'} $.
	By the unitarity of $ \Phi^\Gamma_{h'} $, we have
	\[ \norm{h'}_H=\norm{\Phi^\Gamma_{h'}(h')}_{L^2(\Gamma\backslash G)}=\norm{c\,\Psi^\Gamma_{h'}(h')}_{L^2(\Gamma\backslash G)}=c\norm{P_\Gamma c_{h',h'}}_{L^2(\Gamma\backslash G)}, \]
	hence $ c=\norm{P_\Gamma c_{h',h'}}_{L^2(\Gamma\backslash G)}^{-1}\norm{h'}_H $. This implies \ref{lem:002:2}.
\end{proof}

The following proof of Theorem \ref{thm:001} is based on the techniques used in \cite[proof of Theorem 2.3]{muic12} (resp., \cite[Proposition 7.1]{zunar23}) to compute the inner product of certain cuspidal automorphic forms on $ \SL_2(\R) $ (resp., $ \Sp_{2n}(\R) $). 

\begin{proof}[Proof of Theorem \ref{thm:001}]
	\ref{thm:001:1} Let $ h,h',v,v'\in H_K $. Noting that the inner product formula \eqref{eq:008} obviously holds if $ P_\Gamma c_{H_K,h'}=0 $, we suppose that $ P_\Gamma c_{H_K,h'}\neq0 $. 
	
	We have
	\begin{equation}\label{eq:009}
		\begin{aligned}
		\scal{P_\Gamma c_{h,h'}}{P_\Gamma c_{v,v'}}_{L^2(\Gamma\backslash G)}
		&\underset{\phantom{\text{Lem.\,\ref{lem:006}}}}=\int_{\Gamma\backslash G}\left(P_\Gamma c_{h,h'}\right)(g)\,\sum_{\gamma\in\Gamma}\overline{c_{v,v'}(\gamma g)}\,dg\\
		&\underset{\phantom{\text{Lem.\,\ref{lem:006}}}}=\int_{\Gamma\backslash G}\sum_{\gamma\in\Gamma}\left(P_\Gamma c_{h,h'}\right)(\gamma g)\,\overline{c_{v,v'}(\gamma g)}\,dg\\
		&\underset{\phantom{\text{Lem.\,\ref{lem:006}}}}{\overset{\eqref{eq:105}}=}\int_G\left(P_\Gamma c_{h,h'}\right)(g)\,\overline{c_{v,v'}(g)}\,dg\\
		& \underset{\text{Lem.\,\ref{lem:006}}}{\overset{\eqref{eq:002}}=}\left(R(\overline{c_{v,v'}})P_\Gamma c_{h,h'}\right)(1_G),
	\end{aligned} 
	\end{equation}
	where the right-hand side denotes the value at $ 1_G $ of the unique continuous function belonging to the equivalence class $ R(\overline{c_{v,v'}})P_\Gamma c_{h,h'}\in L^2(\Gamma\backslash G) $. 	
	
	On the other hand, by applying the $ G $-equivalence $ \Phi^\Gamma_{h'}\circ\Phi_{h'}^{-1}:c_{H,h'}\to\mathrm{Cl}_{L^2(\Gamma\backslash G)}P_\Gamma c_{H_K,h'} $ to the equality \eqref{eq:006}, we obtain
	\[ R(\overline{c_{v,v'}})P_\Gamma c_{h,h'}=\frac1{d(\pi)}\scal hv_HP_\Gamma c_{v',h'}, \]
	from which it follows that
	\begin{equation}\label{eq:010}
		\left(R(\overline{c_{v,v'}})P_\Gamma c_{h,h'}\right)(1_G)=\frac1{d(\pi)}\scal hv_H\left(P_\Gamma c_{v',h'}\right)(1_G). 
	\end{equation}
	
	The equalities \eqref{eq:009} and \eqref{eq:010} imply \eqref{eq:008}.
	
	\ref{thm:001:2} 
	Let $ h,h'\in H_K $ and $ v,v'\in V_K $. By Lemma \ref{lem:002}, we have
	\[ P_\Gamma c_{h,h'}\in L^2(\Gamma\backslash G)_{[\pi]}\qquad\text{and}\qquad  P_\Gamma c_{v,v'}\in L^2(\Gamma\backslash G)_{[\sigma]}.  \]
	Since $ L^2(\Gamma\backslash G)_{[\pi]}\perp L^2(\Gamma\backslash G)_{[\sigma]} $, this implies \ref{thm:001:2}.
\end{proof}

\section{Application to vector-valued Siegel cusp forms}\label{sec:110}

Let $ n\in\Z_{>0} $. As in \cite[Lemma 1.2]{zunar25a}, let $ (\rho,V) $ be an irreducible polynomial representation of $ \GL_n(\C) $ of highest weight $ \omega=(\omega_1,\ldots,\omega_n)\in\Z^n $, where $ \omega_1\geq\ldots\geq\omega_n>2n $. Thus, $ \rho $ is the up to equivalence unique irreducible polynomial representation of $ \GL_n(\C) $ such that there exists a unit vector $ v^{top}\in V $ with the following two properties:
\begin{enumerate}
	\item $ \rho(B)v^{top}\subseteq\C v^{top} $, where $ B $ is the subgroup of upper-triangular matrices in $ \GL_n(\C) $.
	\item $ \rho(a)v^{top}=\left(\prod_{r=1}^na_r^{\omega_r}\right)v^{top} $ for every diagonal matrix $ a=\diag(a_1,\ldots,a_n)\in\GL_n(\C) $.
\end{enumerate}
We equip $ V $ with a Hermitian inner product $ \scal\spacedcdot\spacedcdot_V $ with respect to which the restriction $ \rho\big|_{\U(n)} $ is unitary.

Let $ J_n=\begin{pmatrix}&I_n\\-I_n\end{pmatrix}\in\GL_{2n}(\C) $, and let $ i\in\C $ be the imaginary unit. The Lie group
\[ \Sp_{2n}(\R)=\left\{g\in\SL_{2n}(\R):g^\top J_ng=J_n\right\} \]
acts on the left on the Siegel upper half-space 
\[ \calH_n=\left\{z=x+iy\in M_n(\C):z^\top=z\text{ and }y>0\right\} \]
by 
\begin{equation}\label{eq:101}
	 g.z=(Az+B)(Cz+D)^{-1}
\end{equation}
and on the right on the space $ V^{\calH_n} $ of functions $ \calH_n\to V $ by
\[ \left(f\big|_\rho g\right)(z)=\rho(Cz+D)^{-1}f(g.z)  \]
for all $ g=\begin{pmatrix}A&B\\C&D\end{pmatrix}\in\Sp_{2n}(\R) $, $ z\in\calH_n $, and $ f\in V^{\calH_n} $.
The maximal compact subgroup 
\[ K=\left\{k_{A+iB}\coloneqq\begin{pmatrix}A&B\\-B&A\end{pmatrix}\in\GL_{2n}(\R):A+iB\in\U(n)\right\} \]
of $ \Sp_{2n}(\R) $ is the stabilizer of $ iI_n $ with respect to the action \eqref{eq:101}. 

Let $ \left(\pi_\rho,H_\rho\right) $ be the integrable discrete series representation of $ \Sp_{2n}(\R) $ defined on the complex Hilbert space $ H_\rho $ of holomorphic functions $ f:\calH_n\to V $ such that
\begin{equation}\label{eq:100}
	\int_{\calH_n}\norm{\rho\left(y^{\frac12}\right)f(z)}^2_V\,d\mathsf v(z)<\infty, 
\end{equation}
where $ d\mathsf v(z)=\det y^{-n-1}\,\prod_{1\leq r\leq s\leq n}dx_{r,s}\,dy_{r,s} $.
The Hilbert space norm on $ H_\rho $ is given by the square root of \eqref{eq:100}, and the action of $ \Sp_{2n}(\R) $ on $ H_\rho $ is given by
\[ \pi_\rho(g)f=f\big|_\rho g^{-1},\quad g\in\Sp_{2n}(\R),\ f\in H_\rho. \] 
The $ (\mathfrak g,K) $-module $ (H_\rho)_K $ of $ K $-finite vectors in $ H_\rho $ is spanned by the functions $ f_{\mu,v}:\calH_n\to V $,
\begin{equation}\label{eq:109}
	f_{\mu,v}(z)=\mu\left((z-iI_n)(z+iI_n)^{-1}\right)\rho\left(\frac1{2i}(z+iI_n)\right)^{-1}v,  
\end{equation}
where $ v $ goes over $ V $ and $ \mu $ goes over $ \C[X_{r,s}:1\leq r,s\leq n] $, the ring of polynomials with complex coefficients in the $ n^2 $ variables $ X_{r,s} $ with $ r,s\in\left\{1,2,\ldots,n\right\} $ \cite[(15)]{zunar25a}. 

Given a complex Hilbert space $ H $, let us denote its dual Hilbert space by $ H^* $; thus, $ H^* $ is the space of linear functionals $ h^*=\scal\spacedcdot h_H:H\to\C $, where $ h\in H $.
The contragredient representation  of $ \pi_\rho $ is the integrable discrete series representation $ (\pi_\rho^*,H_\rho^*) $ of $ \Sp_{2n}(\R) $ given by
\[ \pi_\rho^*(g)f^*=\left(\pi_\rho(g)f\right)^*,\quad g\in\Sp_{2n}(\R),\ f\in H_\rho. \]
Obviously, we have
\[ (H_\rho^*)_K=\mathrm{span}_\C\left\{f_{\mu,v}^*:\mu\in\C[X_{r,s}:1\leq r,s\leq n],\ v\in V\right\}. \]

Given a function $ f:\calH_n\to V $, we define a function $ F_f:\Sp_{2n}(\R)\to V $,
\begin{equation}\label{eq:103}
	 F_f(g)=\left(f\big|_\rho g\right)(iI_n). 
\end{equation}
By \cite[Proposition 6.4(ii)]{zunar25a}, given $ f\in(H_\rho)_K $ and $ v\in V $, the $ K $-finite matrix coefficient  $ c_{f_{1,v}^*,f^*} $ of the representation $ \pi_\rho^* $ is given by
\begin{equation}\label{eq:054}
	c_{f_{1,v}^*,f^*}=C_\rho\, v^*F_f, 
\end{equation}
where 
\[ C_\rho=\norm{f_{1,v^{top}}}_{H_\rho}^2\in\R_{>0}. \]

\begin{lemma}
	Let $ v\in V $. Then, we have
	\begin{equation}\label{eq:107}
		\norm{f_{1,v}^*}_{H_\rho^*}^2=C_\rho\norm v_V^2. 
	\end{equation}
\end{lemma}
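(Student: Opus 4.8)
The plan is to identify $\norm{f_{1,v}^*}_{H_\rho^*}^2$ with the value at $1_G$ of a diagonal matrix coefficient of $\pi_\rho^*$, and then to read off that value from the matrix-coefficient formula \eqref{eq:054} that has just been recalled. For the convention \eqref{eq:003}, any unitary representation satisfies $c_{h,h}(1_G)=\scal{h}{h}=\norm{h}^2$, so I would begin from
\[ \norm{f_{1,v}^*}_{H_\rho^*}^2=c_{f_{1,v}^*,f_{1,v}^*}(1_G). \]
The crucial observation is that $f_{1,v}$ is the $K$-finite vector $f_{\mu,v}$ of \eqref{eq:109} with $\mu=1$, so \eqref{eq:054} applies with $f=f_{1,v}$ and gives $c_{f_{1,v}^*,f_{1,v}^*}(g)=C_\rho\scal{F_{f_{1,v}}(g)}{v}_V$.

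It then remains only to compute $F_{f_{1,v}}(1_G)$. Since the slash action is trivial at $1_G$, definition \eqref{eq:103} reduces this to $f_{1,v}(iI_n)$, and substituting $z=iI_n$ and $\mu=1$ into \eqref{eq:109} yields $\rho\!\left(\tfrac{1}{2i}(iI_n+iI_n)\right)^{-1}v=\rho(I_n)^{-1}v=v$. Hence $F_{f_{1,v}}(1_G)=v$, and evaluating the previous display at $g=1_G$ gives
\[ \norm{f_{1,v}^*}_{H_\rho^*}^2=C_\rho\scal{v}{v}_V=C_\rho\norm{v}_V^2, \]
as claimed in \eqref{eq:107}.

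There is no serious obstacle in this route; the only points requiring care are the admissibility of $f=f_{1,v}$ in \eqref{eq:054} (guaranteed by its $K$-finiteness) and the one-line normalization $f_{1,v}(iI_n)=v$, so the ``hard part'' is really just spotting the reduction to evaluation at the identity. A more conceptual but longer alternative would avoid \eqref{eq:054} altogether: a direct computation shows $\pi_\rho(k_u)f_{1,v}=f_{1,\rho(u^\top)^{-1}v}$ for $u\in\U(n)$, so the unitarity of $\pi_\rho$ makes the Hermitian form $v\mapsto\norm{f_{1,v}}_{H_\rho}^2=\norm{f_{1,v}^*}_{H_\rho^*}^2$ invariant under $\U(n)$; by Schur's lemma and the irreducibility of $\rho\big|_{\U(n)}$ it is then a scalar multiple of $\norm{\spacedcdot}_V^2$, with the scalar pinned down as $C_\rho$ by testing on the unit vector $v^{top}$ via $C_\rho=\norm{f_{1,v^{top}}}_{H_\rho}^2$. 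I would write up the first, shorter route.
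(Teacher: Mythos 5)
Your proof is correct and is essentially identical to the paper's: both identify $\norm{f_{1,v}^*}_{H_\rho^*}^2$ with $c_{f_{1,v}^*,f_{1,v}^*}(1_{\Sp_{2n}(\R)})$, apply \eqref{eq:054} with $f=f_{1,v}$, and evaluate via \eqref{eq:103} and \eqref{eq:109} to get $F_{f_{1,v}}(1_{\Sp_{2n}(\R)})=f_{1,v}(iI_n)=v$. The alternative Schur-lemma route you sketch is a reasonable backup but is not what the paper does, and your choice to write up the shorter route matches the paper exactly.
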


\begin{proof}
	We have
	\begin{align*}
		\norm{f_{1,v}^*}_{H_\rho^*}^2
		&=c_{f_{1,v}^*,f_{1,v}^*}(1_{\Sp_{2n}(\R)})
		\overset{\eqref{eq:054}}=C_\rho\scal{F_{f_{1,v}}(1_{\Sp_{2n}(\R)})}{v}_V\\
		&\overset{\eqref{eq:103}}=C_\rho\scal{f_{1,v}(iI_n)}v_V
		\overset{\eqref{eq:109}}=C_\rho\scal vv_V
		=C_\rho\norm v_V^2.\qedhere
	\end{align*} 
\end{proof}

Let $ \Gamma $ be a discrete subgroup of $ \Sp_{2n}(\R) $ that is commensurable with $ \Sp_{2n}(\Z) $. The space $ S_\rho(\Gamma) $ of Siegel cusp forms of weight $ \rho $ for $ \Gamma $ is the finite-dimensional space of holomorphic functions $ \phi:\calH_n\to V $ with the following two properties (cf.\ \cite[Lemma 4.1]{zunar25a}):
\begin{enumerate}
	\item $ \phi\big|_\rho\gamma=\phi $ for all $ \gamma\in\Gamma $.
	\item $ \sup_{z\in\calH_n}\norm{\rho\left(y^{\frac12}\right)\phi(z)}_V<\infty $.
\end{enumerate}
We equip $ S_\rho(\Gamma) $ with the Petersson inner product
\[ \scal{\phi_1}{\phi_2}_{S_\rho(\Gamma)}=\frac1{\abs{\Gamma\cap\left\{\pm I_{2n}\right\}}}\int_{\Gamma\backslash\calH_n}\scal{\rho(y)\,\phi_1(z)}{\phi_2(z)}_V\,d\mathsf v(z). \]

\begin{proposition}\label{prop:105}
	\begin{enumerate}[label=\textup{(\roman*)}]
		\item\label{prop:105:1} For every $ f\in(H_\rho)_K $, the Poincar\'e series
		\[ P_{\Gamma,\rho}f=\sum_{\gamma\in\Gamma}f\big|_\rho\gamma \]
		converges absolutely and uniformly on compact subsets of $ \calH_n $.
		\item\label{prop:105:2} We have
		\[ S_\rho(\Gamma)=\left\{P_{\Gamma,\rho}f:f\in(H_\rho)_K\right\}. \]
		\item\label{prop:105:3} The assignment $ \phi\mapsto F_\phi $ defines a unitary isomorphism $ \Phi_{\rho,\Gamma} $ from $ S_\rho(\Gamma) $ to a finite-dimensional subspace $ L_\rho(\Gamma) $ of the Hilbert space $ L^2(\Gamma\backslash\Sp_{2n}(\R),V) $.
		\item\label{prop:105:4} For every $ f\in(H_\rho)_K $, the Poincar\'e series 
		\[ P_\Gamma F_f=\sum_{\gamma\in\Gamma}F_f(\gamma\spacedcdot) \]
		converges absolutely and uniformly on compact subsets of $ \Sp_{2n}(\R) $, and we have
		\begin{equation}\label{eq:106}
			 \Phi_{\rho,\Gamma}(P_{\Gamma,\rho}f)=F_{P_{\Gamma,\rho}f}=P_\Gamma F_f. 
		\end{equation}
	\end{enumerate}
\end{proposition}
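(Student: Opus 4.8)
The plan is to route every assertion through the lift $f\mapsto F_f$ of \eqref{eq:103} and the matrix-coefficient identity \eqref{eq:054}, reducing the analytic claims to the general convergence statement for Poincar\'e series of $K$-finite matrix coefficients of integrable discrete series (recalled just before Proposition~\ref{lem:002}), applied to $\pi_\rho^*$, together with the spanning result \cite[Theorem 1.1]{zunar25a}. Two elementary identities organize the bookkeeping. Since $|_\rho$ is a right action, for every $\gamma\in\Gamma$ we have $F_{f|_\rho\gamma}(g)=\big((f|_\rho\gamma)|_\rho g\big)(iI_n)=\big(f|_\rho(\gamma g)\big)(iI_n)=F_f(\gamma g)$, so that $\sum_{\gamma\in\Gamma}F_{f|_\rho\gamma}=P_\Gamma F_f$ termwise. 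Moreover, writing $j_g=C\,iI_n+D$ for $g=\left(\begin{smallmatrix}A&B\\C&D\end{smallmatrix}\right)$, the definition \eqref{eq:103} gives $F_f(\gamma g)=F_{f|_\rho\gamma}(g)=\rho(j_g)^{-1}\,(f|_\rho\gamma)(g.iI_n)$, whence
\begin{equation*}
	(P_\Gamma F_f)(g)=\rho(j_g)^{-1}\,(P_{\Gamma,\rho}f)(g.iI_n).
\end{equation*}
Since $\rho(j_g)^{\pm1}$ stays bounded as $g$ ranges over a compact set and $g\mapsto g.iI_n$ is a surjective submersion onto $\calH_n$ admitting local sections, absolute and locally uniform convergence of $P_\Gamma F_f$ on $\Sp_{2n}(\R)$ is equivalent to that of $P_{\Gamma,\rho}f$ on $\calH_n$.

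For the convergence in \ref{prop:105:1} and \ref{prop:105:4}, I would fix $f\in(H_\rho)_K$ and $v\in V$ and use \eqref{eq:054} in the form $c_{f_{1,v}^*,f^*}=C_\rho\,v^*F_f=C_\rho\scal{F_f(\spacedcdot)}{v}_V$. Since $\pi_\rho^*$ is an integrable discrete series representation and $f_{1,v}^*,f^*\in(H_\rho^*)_K$, the scalar Poincar\'e series $P_\Gamma c_{f_{1,v}^*,f^*}$ converges absolutely and uniformly on compact subsets of $\Sp_{2n}(\R)$; hence so does $\sum_{\gamma\in\Gamma}\scal{F_f(\gamma\spacedcdot)}{v}_V$ for every $v\in V$. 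As $V$ is finite-dimensional, letting $v$ range over an orthonormal basis shows that the $V$-valued series $P_\Gamma F_f$ converges absolutely and uniformly on compacta, which is the convergence assertion of \ref{prop:105:4}; the displayed equivalence of the previous paragraph then gives \ref{prop:105:1}.

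Part \ref{prop:105:2} I would deduce from \ref{prop:105:1} and \cite[Theorem 1.1]{zunar25a}: each $P_{\Gamma,\rho}f$ is a locally uniform limit of the holomorphic functions $f|_\rho\gamma$, is $|_\rho$-invariant under $\Gamma$ by reindexing the absolutely convergent sum, and satisfies the cusp growth bound, so $P_{\Gamma,\rho}f\in S_\rho(\Gamma)$; since $(H_\rho)_K=\mathrm{span}_\C\{f_{\mu,v}\}$ and $P_{\Gamma,\rho}$ is linear, $\{P_{\Gamma,\rho}f:f\in(H_\rho)_K\}=\mathrm{span}_\C\{P_{\Gamma,\rho}f_{\mu,v}\}$, which equals $S_\rho(\Gamma)$ by the cited spanning result. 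Turning to \ref{prop:105:3}, the right-action computation above together with $\phi|_\rho\gamma=\phi$ gives $F_\phi(\gamma g)=F_\phi(g)$, so $F_\phi$ descends to $\Gamma\backslash\Sp_{2n}(\R)$. The crux is the pointwise identity
\begin{equation*}
	\scal{F_{\phi_1}(g)}{F_{\phi_2}(g)}_V=\scal{\rho(y)\,\phi_1(z)}{\phi_2(z)}_V,\qquad z=x+iy=g.iI_n.
\end{equation*}
To prove it, note that since $\rho$ is holomorphic and $\rho\big|_{\U(n)}$ is unitary, the adjoint with respect to $\scal\spacedcdot\spacedcdot_V$ satisfies $\rho(a)^*=\rho(\overline a^\top)$ for all $a\in\GL_n(\C)$, as both sides are antiholomorphic in $a$ and agree on the real form $\U(n)$. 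Using $F_{\phi_i}(g)=\rho(j_g)^{-1}\phi_i(z)$ and the transformation law $j_g\,\overline{j_g}^\top=y^{-1}$ of the automorphy factor at $iI_n$, one computes $\big(\rho(j_g)\rho(j_g)^*\big)^{-1}=\rho\big(j_g\,\overline{j_g}^\top\big)^{-1}=\rho(y)$, which yields the identity. In particular $\norm{F_\phi(g)}_V^2$ depends only on $z=g.iI_n$, so it is right $K$-invariant and $\scal{F_{\phi_1}}{F_{\phi_2}}_V$ descends to $\Gamma\backslash\calH_n$, where it is the Petersson integrand.

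Finally, taking $\phi_1=\phi_2=\phi$ in the pointwise identity gives $\norm{F_\phi(g)}_V^2=\norm{\rho(y^{1/2})\phi(z)}_V^2$, which is bounded on $\calH_n$ by the cusp condition; as $\Gamma\backslash\calH_n$ has finite volume, $F_\phi\in L^2(\Gamma\backslash\Sp_{2n}(\R),V)$. Unfolding $\scal{F_{\phi_1}}{F_{\phi_2}}_{L^2(\Gamma\backslash\Sp_{2n}(\R),V)}$ along the fibration $\Gamma\backslash\Sp_{2n}(\R)\to\Gamma\backslash\calH_n$ with compact fiber $K$ turns it into a constant multiple of $\int_{\Gamma\backslash\calH_n}\scal{\rho(y)\phi_1(z)}{\phi_2(z)}_V\,d\mathsf v(z)$; the map $\phi\mapsto F_\phi$ is linear and injective, since $F_\phi=0$ forces $\phi(z)=\rho(j_g)F_\phi(g)=0$. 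With $dg$ normalized as in \cite{zunar25a} the unfolding constant is exactly $1$, so $\Phi_{\rho,\Gamma}$ is an isometry of $S_\rho(\Gamma)$ onto its finite-dimensional image $L_\rho(\Gamma)$, proving \ref{prop:105:3}. The remaining identity of \ref{prop:105:4} then follows from the definitions and the two organizing identities: $\Phi_{\rho,\Gamma}(P_{\Gamma,\rho}f)=F_{P_{\Gamma,\rho}f}=\sum_{\gamma\in\Gamma}F_{f|_\rho\gamma}=P_\Gamma F_f$, where the middle equality uses that $|_\rho g$ and evaluation at $iI_n$ commute with the locally uniformly convergent sum defining $P_{\Gamma,\rho}f$. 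The step I expect to be the main obstacle is the constant-matching in this unfolding: verifying that the fibrewise $K$-integral, the fixed Haar normalization, and the factor $\frac1{\abs{\Gamma\cap\{\pm I_{2n}\}}}$ (present because $-I_{2n}\in K$ acts trivially on $\calH_n$) combine to the precise Petersson normalization; the remaining steps are either formal or direct consequences of \eqref{eq:054} and the two cited convergence and spanning results.
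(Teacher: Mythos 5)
Your proof is essentially correct, but it takes a genuinely different route from the paper: the paper's own proof is a three-line citation, deducing \ref{prop:105:1} and \ref{prop:105:2} from \cite[Theorem 6.7]{zunar25a}, \ref{prop:105:3} from \cite[Lemma 4.4]{zunar25a}, and \ref{prop:105:4} from \cite[Lemma 6.6]{zunar25a}, whereas you reconstruct the content from more primitive inputs. Your two organizing identities ($F_{f|_\rho\gamma}=F_f(\gamma\spacedcdot)$ and $(P_\Gamma F_f)(g)=\rho(j_g)^{-1}(P_{\Gamma,\rho}f)(g.iI_n)$) are correct, and routing the convergence of $P_\Gamma F_f$ through \eqref{eq:054} and the general $L^p$-convergence statement for Poincar\'e series of $K$-finite matrix coefficients of the integrable discrete series $\pi_\rho^*$, then transferring it to $\calH_n$, is a clean alternative to the direct estimates in \cite{zunar25a}; likewise your derivation of the pointwise identity $\scal{F_{\phi_1}(g)}{F_{\phi_2}(g)}_V=\scal{\rho(y)\phi_1(z)}{\phi_2(z)}_V$ via $\rho(a)^*=\rho(\overline a^{\top})$ and $j_g\overline{j_g}^{\top}=y^{-1}$ is exactly the mechanism behind \cite[Lemma 4.4]{zunar25a}. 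What your approach buys is a self-contained exposition that makes visible why the lift $\phi\mapsto F_\phi$ is isometric and why the convergence in \ref{prop:105:1} and \ref{prop:105:4} are equivalent; what the paper's approach buys is brevity and the outsourcing of the one genuinely fiddly point, which you correctly flag yourself: the unfolding constant in \ref{prop:105:3} (the fibrewise $K$-integral against the fixed Haar normalization, together with the factor $\abs{\Gamma\cap\{\pm I_{2n}\}}^{-1}$ coming from $-I_{2n}$ acting trivially on $\calH_n$). That constant is not determined by anything stated in the present paper --- it depends on the normalization of $dg$ fixed in \cite{zunar25a} --- so your proof is complete only conditional on that normalization check; everything else is sound. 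One small redundancy: in \ref{prop:105:2} you do not need to verify the cusp growth bound for $P_{\Gamma,\rho}f$ by hand, since membership of the spanning series $P_{\Gamma,\rho}f_{\mu,v}$ in $S_\rho(\Gamma)$ is already part of the cited spanning theorem, and linearity does the rest (though, if you did want it independently, it follows from your own identity $\norm{F_\phi(g)}_V=\norm{\rho(y^{1/2})\phi(z)}_V$ combined with $P_\Gamma c_{f_{1,v}^*,f^*}\in L^\infty(\Gamma\backslash G)$).
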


\begin{proof}
	The claims \ref{prop:105:1} and \ref{prop:105:2} follow from \cite[Theorem 6.7]{zunar25a}, \ref{prop:105:3} follows from  \cite[Lemma 4.4]{zunar25a}, and \ref{prop:105:4} follows from \ref{prop:105:1} and \cite[Lemma 6.6]{zunar25a}.
\end{proof}

With the above results and the inner product formula \eqref{eq:008} at hand, we are ready to give a re\-pre\-sen\-ta\-tion-the\-o\-re\-tic proof of Corollary \ref{cor:111}, which we restate here for the reader's convenience as Corollary \ref{cor:112}.

\begin{corollary}\label{cor:112}
	Let $ \phi\in S_\rho(\Gamma) $ and $ v\in V $. Then, we have
	\[ \scal{\phi}{P_{\Gamma,\rho}f_{1,v}}_{S_\rho(\Gamma)}=\frac{\dim_\C V}{d(\pi_\rho)}\scal{\phi(iI_n)}v_{V}. \]
\end{corollary}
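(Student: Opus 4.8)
The plan is to transfer the Petersson inner product to $L^2(\Gamma\backslash\Sp_{2n}(\R),V)$ through the unitary isomorphism $\Phi_{\rho,\Gamma}$ of Proposition \ref{prop:105}\ref{prop:105:3}, and then to reduce the resulting $V$-valued $L^2$-inner product to a sum of scalar inner products of Poincar\'e series of $K$-finite matrix coefficients of $\pi_\rho^*$, to which the inner product formula \eqref{eq:008} applies directly. First I would use Proposition \ref{prop:105}\ref{prop:105:2} to write $\phi=P_{\Gamma,\rho}f$ for some $f\in(H_\rho)_K$. Since $\Phi_{\rho,\Gamma}$ is unitary and $\Phi_{\rho,\Gamma}(P_{\Gamma,\rho}f)=P_\Gamma F_f$ by \eqref{eq:106}, this gives
\[ \scal{\phi}{P_{\Gamma,\rho}f_{1,v}}_{S_\rho(\Gamma)}=\scal{P_\Gamma F_f}{P_\Gamma F_{f_{1,v}}}_{L^2(\Gamma\backslash\Sp_{2n}(\R),V)}. \]

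Next I would fix an orthonormal basis $(e_j)_{j=1}^{\dim_\C V}$ of $V$ and diagonalize the $V$-valued inner product as $\scal{P_\Gamma F_f}{P_\Gamma F_{f_{1,v}}}_{L^2(\Gamma\backslash\Sp_{2n}(\R),V)}=\sum_j\scal{e_j^*P_\Gamma F_f}{e_j^*P_\Gamma F_{f_{1,v}}}_{L^2(\Gamma\backslash\Sp_{2n}(\R))}$, where $e_j^*F(g)=\scal{F(g)}{e_j}_V$. By \eqref{eq:054}, each scalar component is a matrix coefficient of $\pi_\rho^*$: indeed $e_j^*F_f=\tfrac1{C_\rho}c_{f_{1,e_j}^*,f^*}$, hence $e_j^*P_\Gamma F_f=\tfrac1{C_\rho}P_\Gamma c_{f_{1,e_j}^*,f^*}$, and likewise with $f$ replaced by $f_{1,v}$. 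The sum thus becomes $\tfrac1{C_\rho^2}\sum_j\scal{P_\Gamma c_{f_{1,e_j}^*,f^*}}{P_\Gamma c_{f_{1,e_j}^*,f_{1,v}^*}}_{L^2(\Gamma\backslash\Sp_{2n}(\R))}$, a sum of inner products of Poincar\'e series of $K$-finite matrix coefficients of the integrable discrete series representation $\pi_\rho^*$.

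Now I would apply the inner product formula \eqref{eq:008} (with $\pi=\pi_\rho^*$) to each summand, obtaining $\tfrac1{d(\pi_\rho^*)}\norm{f_{1,e_j}^*}_{H_\rho^*}^2\,(P_\Gamma c_{f_{1,v}^*,f^*})(1_{\Sp_{2n}(\R)})$. By \eqref{eq:107} and $\norm{e_j}_V=1$ we have $\norm{f_{1,e_j}^*}_{H_\rho^*}^2=C_\rho$, so every summand equals $\tfrac{C_\rho}{d(\pi_\rho^*)}(P_\Gamma c_{f_{1,v}^*,f^*})(1_{\Sp_{2n}(\R)})$, and summing over the $\dim_\C V$ basis vectors produces the factor $\dim_\C V$. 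It then remains to evaluate the residual Poincar\'e series at the identity: combining \eqref{eq:054}, \eqref{eq:106}, and \eqref{eq:103} gives $(P_\Gamma c_{f_{1,v}^*,f^*})(g)=C_\rho\scal{(P_\Gamma F_f)(g)}{v}_V=C_\rho\scal{F_\phi(g)}{v}_V$, which at $g=1_{\Sp_{2n}(\R)}$ equals $C_\rho\scal{\phi(iI_n)}{v}_V$ since $F_\phi(1_{\Sp_{2n}(\R)})=\phi(iI_n)$. Assembling the constants, $\tfrac1{C_\rho^2}\cdot(\dim_\C V)\cdot\tfrac{C_\rho}{d(\pi_\rho^*)}\cdot C_\rho\scal{\phi(iI_n)}{v}_V=\tfrac{\dim_\C V}{d(\pi_\rho^*)}\scal{\phi(iI_n)}{v}_V$.

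The one remaining point—which I expect to be the main subtlety rather than a genuine obstacle—is the identification $d(\pi_\rho^*)=d(\pi_\rho)$, i.e.\ the standard fact that a discrete series representation and its contragredient share the same formal degree; this yields the stated normalization $\dim_\C V/d(\pi_\rho)$. Apart from this, the argument is a matter of careful bookkeeping: the genuine content is carried entirely by the inner product formula \eqref{eq:008}, while the orthonormal basis of $V$ serves only to diagonalize the $V$-valued inner product so that \eqref{eq:008} can be applied componentwise, the factor $\dim_\C V$ emerging from the resulting sum.
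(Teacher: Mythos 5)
Your proposal is correct and follows essentially the same route as the paper's proof: transfer via the unitary map $\Phi_{\rho,\Gamma}$, decompose the $V$-valued inner product over an orthonormal basis of $V$, identify each component as a Poincar\'e series of a matrix coefficient of $\pi_\rho^*$ via \eqref{eq:054}, apply \eqref{eq:008} together with \eqref{eq:107} and $d(\pi_\rho^*)=d(\pi_\rho)$, and evaluate the remaining series at the identity. The bookkeeping of constants matches the paper exactly.
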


\begin{proof}
	Let us fix an orthonormal basis $ (e_j)_{j=1}^{\dim_\C V} $ for $ V $. We note that for all $ \varphi_1,\varphi_2\in L^2(\Gamma\backslash \Sp_{2n}(\R),V) $, we have
	\begin{equation}\label{eq:113}
			\scal{\varphi_1}{\varphi_2}_{L^2(\Gamma\backslash \Sp_{2n}(\R),V)}
			=\sum_{j=1}^{\dim_\C V}\scal{e_j^*\varphi_1}{e_j^*\varphi_2}_{L^2(\Gamma\backslash \Sp_{2n}(\R))}.
	\end{equation}
	
	By Proposition \ref{prop:105}\ref{prop:105:2}, we have
	\begin{equation}\label{eq:108}
		\phi=P_{\Gamma,\rho}f
	\end{equation}
	for some $ f\in(H_\rho)_K $.  The Petersson inner product
	\[ \scal{\phi}{P_{\Gamma,\rho}f_{1,v}}_{S_\rho(\Gamma)}=\scal{P_{\Gamma,\rho}f}{P_{\Gamma,\rho}f_{1,v}}_{S_\rho(\Gamma)} \]
	by Proposition \ref{prop:105}\ref{prop:105:3} and \eqref{eq:106} equals
	\[ \begin{aligned}
		\scal{P_{\Gamma}F_f}{P_{\Gamma}F_{f_{1,v}}}_{L^2(\Gamma\backslash\Sp_{2n}(\R),V)}
		&\overset{\eqref{eq:113}}=\sum_{j=1}^{\dim_\C V}\scal{e_j^*P_\Gamma F_f}{e_j^*P_{\Gamma}F_{f_{1,v}}}_{L^2(\Gamma\backslash \Sp_{2n}(\R))}\\
		&\overset{\phantom{\eqref{eq:113}}}=\sum_{j=1}^{\dim_\C V}\scal{P_\Gamma e_j^*F_f}{P_{\Gamma}e_j^*F_{f_{1,v}}}_{L^2(\Gamma\backslash \Sp_{2n}(\R))},
		\end{aligned} \]
	which by the formula \eqref{eq:054} for matrix coefficients of $ \pi_\rho^* $ equals 
	\begin{equation}\label{eq:110}
		C_\rho^{-2}\sum_{j=1}^{\dim_\C V}\scal{P_\Gamma c_{f_{1,e_j}^*,f^*}}{P_{\Gamma}c_{f_{1,e_j}^*,f_{1,v}^*}}_{L^2(\Gamma\backslash \Sp_{2n}(\R))}. 
	\end{equation}
	Applying the inner product formula \eqref{eq:008}, we rewrite \eqref{eq:110} as
	\[ C_\rho^{-2}\sum_{j=1}^{\dim_\C V}\frac1{d(\pi_\rho^*)}\norm{f_{1,e_j}^*}^2_{H_\rho^*}\left(P_\Gamma c_{f_{1,v}^*,f^*}\right)\left(1_{\Sp_{2n}(\R)}\right) \]
	and then, noting that $ d(\pi_\rho^*)=d(\pi_\rho) $ and applying \eqref{eq:107} and, for the second time, the formula \eqref{eq:054} for matrix coefficients of $ \pi_\rho^* $, as
	\[ \begin{aligned}
		\frac{\dim_\C V}{d(\pi_\rho)}&\left(P_\Gamma v^*F_f\right)\left(1_{\Sp_{2n}(\R)}\right)
		\overset{\eqref{eq:106}}=\frac{\dim_\C V}{d(\pi_\rho)}\left(v^*F_{P_{\Gamma,\rho}f}\right)\left(1_{\Sp_{2n}(\R)}\right)\\
		&\overset{\eqref{eq:108}}=\frac{\dim_\C V}{d(\pi_\rho)}\scal{F_\phi\left(1_{\Sp_{2n}(\R)}\right)}v_V
		\overset{\eqref{eq:103}}=\frac{\dim_\C V}{d(\pi_\rho)}\scal{\phi(iI_n)}v_V,
	\end{aligned} \]
	thus finishing the proof of the corollary.
\end{proof}

\bibliographystyle{amsplain}

\begin{thebibliography}{999999}
	
	\bibitem[Dix77]{dixmier77} Dixmier, J.: \textit{$ C^* $-algebras}, North-Holland Math.\ Library \textbf{15}, North-Holland Publishing Co., Amsterdam-New York-Oxford (1977)
	
	\bibitem[God57]{godement57} Godement, R.: S\'erie de Poincar\'e et spitzenformen. \textit{S\'eminaire Henri Cartan}, tome 10, no 1, exp. no 10, 1--38 (1957-1958)
	
	\bibitem[Kna02]{knapp02} Knapp, A.\ W.: \textit{Lie groups beyond an introduction},	Progr.\ Math.\ \textbf{140}, Birkh\"auser Boston, Inc., Boston, MA (2002)
	
	\bibitem[Kna86]{knapp86} Knapp, A.\ W.:	\textit{Representation theory of semisimple groups. An overview based on examples}. Princeton Math.\ Ser.\ \textbf{36}, Princeton University Press, Princeton, NJ (1986)
	
	
	\bibitem[Mui12]{muic12} Mui\'c, G.:	On the inner product of certain automorphic forms and applications.	\textit{J.\ Lie Theory} \textbf{22}(4), 1091--1107 (2012)
	
	\bibitem[Mui16]{muic16} Mui\'c, G.:	Fourier coefficients of automorphic forms and integrable discrete series. \textit{J.\ Funct.\ Anal.\ }\textbf{270}(10), 3639--3674 (2016)
	
	\bibitem[Mui19]{muic19} Mui\'c, G.: Smooth cuspidal automorphic forms and integrable discrete series. \textit{Math.\ Z.\ }\textbf{292}(3--4), 895--922 (2019)
	
	\bibitem[Wall88]{wallach88} Wallach, N.~R.:	\textit{Real reductive groups. I.} Pure Appl.\ Math.\ \textbf{132},	Academic Press, Inc., Boston, MA (1988)
	
	\bibitem[War72]{warner72} Warner, G.: \textit{Harmonic analysis on semi-simple Lie groups. I}, Die Grundlehren der mathematischen Wissenschaften \textbf{188},	Springer-Verlag, New York-Heidelberg (1972) 
	
	\bibitem[\v Zun23]{zunar23} \v Zunar, S.: On a family of Siegel Poincar\'e series. \textit{Int.\ J.\ Number Theory} \textbf{19}(9), 2215--2239 (2023)
	
	\bibitem[\v Zun25a]{zunar25a} \v Zunar, S.: Construction and non-vanishing of a family of vector-valued Siegel Poincaré series. \textit{J.\ Number Theory }\textbf{268}, 95--123 (2025)
	
	\bibitem[\v Zun25b]{zunar25b} \v Zunar, S.: \textit{On the non-vanishing of Poincar\'e series on irreducible bounded symmetric domains}, preprint, \href{https://arxiv.org/abs/2501.06876v1}{arXiv:2501.06876v1}
	
	
	
\end{thebibliography}
\linespread{.96}

\end{document}